\newtheorem{theorem}{Theorem}[section]
\newtheorem{corollary}[theorem]{Corollary}
\newtheorem{lemma}[theorem]{Lemma}
\newtheorem{conjecture}{Conjecture}
\newenvironment{proof*}{\begin{trivlist}\item[\hskip\labelsep{\bf Proof of Theorem \ref{t2.4}.}]}{%
\hfill$\square$\rm\end{trivlist}}
\newenvironment{ack}{\begin{trivlist}\item[\hskip\labelsep{\bf Acknowledgements.}]}{%
\hfill$\square$\rm\end{trivlist}}
\begin{document}
\bibliographystyle{plainnat}
\pagestyle{plain} \pagenumbering{arabic}
\date{}
\newpage
\title{\textbf{\small{A Note On The Non-Artinianness of Top Local Cohomology Modules }}}
\author{\small{Tu\u{g}ba Y\i{ı}ld\i{ı}r\i{ı}m}}
\maketitle
\begin{center}
\small{{Department of Mathematics} \\
{Istanbul Technical University}\\
{Maslak, 34469, Istanbul, Turkey}\\
E-mail: tugbayildirim@itu.edu.tr}
\end{center}
\maketitle

\begin{abstract}
\small
{Let $R$ be a Noetherian ring, $I$ an ideal of $R$ and $M$ an $R$-module. In this article, we examine the question of whether an arbitrary top local cohomology module, $\operatorname{H}^{\operatorname{cd}(I,M)}_I(M)$, is Artinian, or not. Several results related to this question are obtained; in particular, we prove that over a  Noetherian local unique factorization domain $R$ of dimension three, for a  finitely generated faithful module $M$, a top local cohomology module is Artinian if and only if $\operatorname{cd}(I,M)= 3$.}
\end{abstract}
 {\bf \emph{Keywords}:}\emph{ Local cohomology; Artinian module; Support.\\
 2000 Mathematics Subject Classification.  	13D45, 13E10.}
\section{Introduction}\label{intro} 
Local cohomology theory is an active area of research in commutative algebra, algebraic geometry and related fields. One of the classical and long-standing problem in this theory is to determine whether a given local cohomology module is Artinian, or not [Third Problem, \cite{Huneke}].
Regarding this problem, Erdo\u{g}du and Y\i ld\i r\i m \cite{Erdogdu-Yildirim} recently proved that  over a Noetherian local ring $(R,\mathfrak m)$, for a finitely generated module $M$ of dimension $n$, the top local cohomology module, $\operatorname{H}^{\operatorname{cd}(I,M)}_I(M)$, is non-Artinian for all ideals $I$ of $R$ with $d=\dim(M/IM)>0$ and ${\operatorname{cd}(I,M)}=n-d.$ Motivated by this result, we ask whether a top local cohomology module $\operatorname{H}^{\operatorname{cd}(I,M)}_I(M)$ is non-Artinian in the case when ${\operatorname{cd}(I,M)}=n-d+1$ and, in fact, we conjecture that, except in the trivial case $d=1,$ it always is: 

\begin{conjecture} \label{conj1} 
Let $(R,\mathfrak m)$ be a Noetherian local ring,  $M$ a finitely generated $R$-module of dimension $n$ and $I$ an ideal of $R$ with $\dim(M/IM)=d>1$. Then top local cohomology module, $\operatorname{H}^{\operatorname{cd}(I,M)}_I(M)$, is non-Artinian for all ideals $I$ of $R$  with $\operatorname{cd}(I,M)=n-d+1$.
\end{conjecture}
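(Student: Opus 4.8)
\medskip
\noindent\textbf{On a possible approach to Conjecture \ref{conj1} (and why it stalls).}
The idea is to reduce to the ring itself and then to play the Second Vanishing Theorem off against the Erdo\u{g}du--Y\i ld\i r\i m result cited above. Since completing $R$ changes neither $\dim M$, $\dim(M/IM)$, $\operatorname{cd}(I,M)$, nor the Artinianness of $H^i_I(M)$, we may assume $R$ is complete; a prime--filtration argument as in \cite{Erdogdu-Yildirim} then reduces the problem to exhibiting, among the minimal primes $\mathfrak p$ of $\operatorname{Supp} M$ with $\operatorname{cd}(I,R/\mathfrak p)=\operatorname{cd}(I,M)$, one for which $H^{\operatorname{cd}(I,M)}_I(R/\mathfrak p)$ is non-Artinian. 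Relabelling, we may assume $R$ is a complete local domain of dimension $n$ and $M=R$, with $\operatorname{cd}(I,R)=n-d+1$ and $d=\dim(R/I)\ge 2$; I suppress the routine bookkeeping needed to keep $n$ and $d$ in this relation along the reduction.

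\medskip
Suppose first that $d=2$, so $\operatorname{cd}(I,R)=n-1$ and $H^{n-1}_I(R)\neq 0$. By the Second Vanishing Theorem the punctured spectrum $\operatorname{Spec}(R/I)\setminus\{\mathfrak m\}$ is disconnected, so $\sqrt I=\mathfrak a\cap\mathfrak b$ with $\mathfrak a+\mathfrak b$ an $\mathfrak m$--primary ideal and $\dim(R/\mathfrak a),\dim(R/\mathfrak b)\ge 1$. The Mayer--Vietoris sequence for $\mathfrak a,\mathfrak b$, together with the Hartshorne--Lichtenbaum vanishing theorem (which forces $H^n_{\mathfrak a}(R)=H^n_{\mathfrak b}(R)=0$, since $\dim(R/\mathfrak a),\dim(R/\mathfrak b)\ge 1$) and the Artinianness of every $H^i_{\mathfrak m}(R)$, yields an exact sequence
\[
H^{n-1}_{\mathfrak a}(R)\oplus H^{n-1}_{\mathfrak b}(R)\longrightarrow H^{n-1}_I(R)\longrightarrow H^n_{\mathfrak m}(R)\longrightarrow 0
\]
in which the kernel of the left-hand map is a quotient of the Artinian module $H^{n-1}_{\mathfrak m}(R)$; hence $H^{n-1}_I(R)$ is non-Artinian as soon as $H^{n-1}_{\mathfrak a}(R)$ or $H^{n-1}_{\mathfrak b}(R)$ is. Now if some minimal prime $\mathfrak q$ of $I$ has $\dim(R/\mathfrak q)=1$, then $V(\mathfrak q)$ meets every other component of $V(I)$ only at $\mathfrak m$, so it is a connected component of the punctured spectrum; taking $\mathfrak a=\mathfrak q$ we get $\operatorname{cd}(\mathfrak a,R)=n-1=n-\dim(R/\mathfrak a)$, and the Erdo\u{g}du--Y\i ld\i r\i m theorem \cite{Erdogdu-Yildirim} makes $H^{n-1}_{\mathfrak a}(R)$ non-Artinian. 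Otherwise every minimal prime of $I$ has dimension $2$, and one recurses, applying the Second Vanishing Theorem to $\mathfrak a$ and to $\mathfrak b$ in turn, until each piece is either a single two-dimensional prime (whose $(n-1)$-st local cohomology vanishes, its punctured spectrum being irreducible) or has already been treated.

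\medskip
The main obstacle --- and the reason I expect the statement to need a further hypothesis --- is exactly where this recursion stops: at configurations $\sqrt I=\mathfrak a\cap\mathfrak b$ in which $\mathfrak a$ and $\mathfrak b$ are intersections of two-dimensional primes each having connected punctured spectrum. There $H^{n-1}_{\mathfrak a}(R)=H^{n-1}_{\mathfrak b}(R)=0$ and the displayed sequence collapses to $H^{n-1}_I(R)\cong H^n_{\mathfrak m}(R)$, which is Artinian. The smallest instance is $R=k[[x_1,x_2,x_3,x_4]]$, $I=(x_1,x_2)\cap(x_3,x_4)$: here $n=4$, $d=\dim(R/I)=2$, $\operatorname{cd}(I,R)=3=n-d+1$, and $H^3_I(R)\cong E_R(k)$ is Artinian. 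Thus the jump of $\operatorname{cd}$ from $n-d$ to $n-d+1$ can be caused by a purely $\mathfrak m$--primary "disconnection'' contributing only an Artinian $H^n_{\mathfrak m}$--term, so Conjecture \ref{conj1} cannot hold as stated and must be repaired --- e.g.\ by requiring that in every such splitting of the punctured spectrum one side be a curve, or, more transparently, that $V(I)$ admit no disconnection of its punctured spectrum into two parts each of dimension $\ge 2$. Under such a hypothesis the argument above settles $d=2$; for $d\ge 3$ one would replace the Second Vanishing Theorem by the higher connectedness and vanishing theorems for cohomological dimension and isolate the analogous obstructing configurations, and proving that the correct hypothesis suffices uniformly in $d$ is, I expect, the genuinely hard part.
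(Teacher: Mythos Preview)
Your write-up is not a proof of the conjecture but something more valuable: a disproof. The paper does \emph{not} prove Conjecture~\ref{conj1}; it is posed there as an open problem, and the only case the paper settles is the very special one where $R$ is a three-dimensional local UFD and $M$ is faithful (Theorem~\ref{t2.4}). So there is no ``paper's proof'' against which to compare your argument.

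Your counterexample is correct and classical (it is Hartshorne's example). With $R=k[[x_1,x_2,x_3,x_4]]$, $M=R$, and $I=(x_1,x_2)\cap(x_3,x_4)$, one has $n=4$, $d=\dim(R/I)=2>1$, and the Mayer--Vietoris sequence with $\mathfrak a=(x_1,x_2)$, $\mathfrak b=(x_3,x_4)$, $\mathfrak a+\mathfrak b=\mathfrak m$ gives $H^3_I(R)\cong H^4_{\mathfrak m}(R)\cong E_R(k)\neq 0$ and $H^4_I(R)=0$; hence $\operatorname{cd}(I,R)=3=n-d+1$ while $H^{\operatorname{cd}(I,R)}_I(R)$ is Artinian. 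This contradicts the conjecture as stated. Your diagnosis of the failure---that the jump of $\operatorname{cd}$ from $n-d$ to $n-d+1$ can be produced entirely by an $\mathfrak m$-primary disconnection of the punctured spectrum of $R/I$, contributing only an Artinian $H^n_{\mathfrak m}$-term---is exactly right, and the repair you propose (ruling out decompositions of $V(I)\setminus\{\mathfrak m\}$ into two pieces each of dimension $\ge 2$) is the natural one. Note incidentally that the paper's Theorem~\ref{t2.4} is not touched by your example: in dimension three with $d=2$ one has $n-d+1=2$ and $\operatorname{ht} I\le 1$, so in a UFD the radical of $I$ is principal and no such disconnection phenomenon can occur.
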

In this article, we first show that Conjecture \ref{conj1} holds true in the case when $R$ is a Noetherian local unique factorization domain of dimension three and $M$ a finitely generated faithful $R$-module. We then use this result to conclude that, under the same conditions on $R$ and $M$, the top local cohomology module, $\operatorname{H}^{\operatorname{cd}(I,M)}_I(M)$, is Artinian if and only if $\operatorname{cd}(I,M)= 3$.\par
It is a well-known fact that when $R$ is of dimension three, for a finitely generated $R$-module $M$, $\operatorname{Supp}(\operatorname{H}^{i}_I(M))$ is Zariski closed for all $i$  and for all ideals $I$ of $R$ [Corollary 2.6,  \cite{Marley}]. This fact together with our results above compelled  us to ask whether the assumptions of Conjecture \ref{conj1}, together with the condition $\operatorname{Supp}(\operatorname{H}^{i}_I(M))$ being finite for all $i$, implies that all top local cohomology modules are non-Artinian except in the trivial case when  $\operatorname{cd}(I,M)= \dim M$?
In this paper, we give an affirmative answer to that question and use this to provide conditions under which all top local cohomology modules, but the ones with $\operatorname{cd}(I,M)= \dim M$, are non-Artinian when the underlying ring $R$ is of dimension four. 
\section{Preliminaries}
The purpose of this section is to state and prove some basic results which we need them in the next section:\\
Let $R$ be a Noetherian ring, $I$ an ideal of $R$ and $M$ an $R$-module. The cohomological dimension of $M$ with respect to $I$, denoted by ${\operatorname{cd}(I,M)}$, is defined to be the supremum of the set of integers $i$ such that $\operatorname{H}^{i}_I(M)\neq 0.$\\
We begin this section with the following well-known results which are recently proved in \cite{Erdogdu-Yildirim}:
\begin{lemma}\label{l2.3}\emph{({\cite{Erdogdu-Yildirim}, Corollary 3.2})} Let $(R,\mathfrak m)$ be a Noetherian local ring,  $M$ a finitely generated $R$-module of dimension $n$ and $I$ an ideal of $R$ such that $\dim(M/IM)=d$. Then $ n-d$ is a lower bound for $c=\operatorname{cd}(I,M)$. Moreover, if $c=n-d$, then 
\begin{equation*}
\operatorname{H}^{d}_{\mathfrak m} (\operatorname{H}^{n-d}_I(M)) \cong \operatorname{H}^{n}_{\mathfrak m}(M)
\end{equation*}
and $\dim \operatorname{Supp}(\operatorname{H}^{n-d}_I(M))= d$.
\end{lemma}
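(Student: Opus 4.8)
The plan is to build everything on the Grothendieck spectral sequence attached to the equality of functors $\Gamma_{\mathfrak m}=\Gamma_{\mathfrak m}\circ\Gamma_I$, which holds because $I\subseteq\mathfrak m$ forces $\Gamma_{\mathfrak m}(N)=\Gamma_{\mathfrak m}(\Gamma_I(N))$ for every $R$-module $N$. Since $\Gamma_I$ carries injective $R$-modules to injective (hence $\Gamma_{\mathfrak m}$-acyclic) modules over the Noetherian ring $R$, there is a first-quadrant spectral sequence
$$E_2^{p,q}=\operatorname{H}^p_{\mathfrak m}\bigl(\operatorname{H}^q_I(M)\bigr)\ \Longrightarrow\ \operatorname{H}^{p+q}_{\mathfrak m}(M).$$
I would first record two vanishing bounds on its $E_2$-page: $E_2^{p,q}=0$ for $q>c:=\operatorname{cd}(I,M)$, by definition of cohomological dimension, and $E_2^{p,q}=0$ for $p>d$, because $\operatorname{Supp}\operatorname{H}^q_I(M)\subseteq\operatorname{Supp}(M/IM)$ has dimension at most $d$, so Grothendieck's vanishing theorem applies to the module $\operatorname{H}^q_I(M)$. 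Since these cohomology modules need not be finitely generated, the one technical point is to invoke the version of Grothendieck vanishing valid for arbitrary modules over a Noetherian ring (or to reduce to the finitely generated case by a direct-limit argument).

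For the lower bound $c\ge n-d$, I would use Grothendieck's non-vanishing theorem: $\dim M=n$ gives $\operatorname{H}^n_{\mathfrak m}(M)\neq 0$, so on the antidiagonal $p+q=n$ some $E_\infty^{p,n-p}$ is nonzero; being a subquotient of $E_2^{p,n-p}=\operatorname{H}^p_{\mathfrak m}(\operatorname{H}^{n-p}_I(M))$, it forces $p\le d$ and $n-p\le c$, whence $c\ge n-p\ge n-d$. Assume now $c=n-d$. On the antidiagonal $p+q=n$ the constraints $p\le d$ and $q\le n-d$ leave only the corner $(p,q)=(d,n-d)$, so $E_2^{d,n-d}=\operatorname{H}^d_{\mathfrak m}(\operatorname{H}^{n-d}_I(M))$ is the unique possibly-nonzero term there. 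I would then verify that this spot is a permanent cycle: each incoming differential $d_r\colon E_r^{d-r,\,n-d+r-1}\to E_r^{d,\,n-d}$ emanates from a group with $q$-coordinate $n-d+r-1>c$ (for $r\ge 2$), hence zero, and each outgoing differential $d_r\colon E_r^{d,\,n-d}\to E_r^{d+r,\,n-d-r+1}$ lands in a group with $p$-coordinate $d+r>d$, hence zero. Therefore $E_\infty^{d,n-d}=E_2^{d,n-d}$, and as it is the only nonzero graded piece of the finite filtration on $\operatorname{H}^n_{\mathfrak m}(M)$, we get $\operatorname{H}^d_{\mathfrak m}(\operatorname{H}^{n-d}_I(M))\cong\operatorname{H}^n_{\mathfrak m}(M)$.

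It remains to pin down $\dim\operatorname{Supp}\operatorname{H}^{n-d}_I(M)$. The inequality $\le d$ is immediate from $\operatorname{Supp}\operatorname{H}^{n-d}_I(M)\subseteq\operatorname{Supp}(M/IM)$. If it were $<d$, then Grothendieck vanishing would give $\operatorname{H}^d_{\mathfrak m}(\operatorname{H}^{n-d}_I(M))=0$, contradicting the isomorphism above together with $\operatorname{H}^n_{\mathfrak m}(M)\neq 0$; hence equality holds. The only real obstacle in this argument is the careful justification of Grothendieck vanishing for the (possibly non-finitely-generated) local cohomology modules $\operatorname{H}^q_I(M)$ — once that is granted, the remainder is routine bookkeeping with the spectral sequence.
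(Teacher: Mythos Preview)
Your argument is correct. The paper does not actually prove this lemma: it merely quotes it from \cite{Erdogdu-Yildirim} (Corollary~3.2) without proof, so there is no ``paper's own proof'' to compare against. That said, your spectral-sequence approach is exactly the machinery the paper itself invokes later (in the proof of Theorem~\ref{t2.2}), so your method is entirely in the spirit of the paper and is the standard way to obtain this result. Your handling of the differentials at the corner $(d,n-d)$ is correct, and your one flagged technical point---Grothendieck vanishing for the possibly non-finitely-generated modules $\operatorname{H}^q_I(M)$---is not a real obstacle: over a Noetherian ring the vanishing $\operatorname{H}^i_{\mathfrak m}(N)=0$ for $i>\dim\operatorname{Supp}(N)$ holds for arbitrary $N$ (see, e.g., \cite[Theorem~6.1.2]{BS}).
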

\begin{lemma}\label{l2.4}\emph{( {\cite{Erdogdu-Yildirim}, Theorem 4.4})} Let $(R,\mathfrak m)$ be a Noetherian local ring,  $M$ a finitely generated $R$-module of dimension $n$ and $I$ an ideal of $R$ such that $\dim(M/IM)=1$. Then $\operatorname{H}^{\operatorname{cd}(I,M)}_I(M)$ is Artinian if and only if $\operatorname{cd}(I,M)=n$.
\end{lemma}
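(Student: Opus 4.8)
The plan is first to locate $c:=\operatorname{cd}(I,M)$. Grothendieck's vanishing theorem gives $\operatorname{H}^{i}_I(M)=0$ for $i>\dim M=n$, so $c\le n$, while Lemma \ref{l2.3} applied with $d=1$ gives $c\ge n-d=n-1$. Thus $c\in\{n-1,n\}$, and the statement reduces to two things: that $\operatorname{H}^{n-1}_I(M)$ is non-Artinian whenever $c=n-1$, and that $\operatorname{H}^{n}_I(M)$ is always Artinian. Together these give $\operatorname{H}^{c}_I(M)$ Artinian $\iff c=n$.

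The first point is immediate from Lemma \ref{l2.3}: when $c=n-1$ we are exactly in the case $c=n-d$ with $d=1$, so that lemma yields $\dim\operatorname{Supp}\bigl(\operatorname{H}^{n-1}_I(M)\bigr)=1$. Since any Artinian module over a Noetherian local ring has support contained in $\{\mathfrak m\}$, the module $\operatorname{H}^{n-1}_I(M)$ cannot be Artinian; this is the ``only if'' direction (in contrapositive form, using $c\ge n-1$).

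The heart of the argument, and the step I expect to be the main obstacle, is showing that the top module $\operatorname{H}^{n}_I(M)=\operatorname{H}^{\dim M}_I(M)$ is Artinian for every ideal $I$; note that merely checking $\operatorname{Supp}\bigl(\operatorname{H}^{n}_I(M)\bigr)\subseteq\{\mathfrak m\}$ is not by itself enough to conclude Artinianness. I would proceed in three steps. First, a dimension bound: for any $q$ and any $\mathfrak p\in\operatorname{Spec}R$ with $\dim R/\mathfrak p>n-q$, the standard inequality $\dim M_{\mathfrak p}+\dim R/\mathfrak p\le\dim M$ forces $\dim M_{\mathfrak p}<q$, hence $\operatorname{H}^{q}_I(M)_{\mathfrak p}\cong\operatorname{H}^{q}_{IR_{\mathfrak p}}(M_{\mathfrak p})=0$ by Grothendieck vanishing; therefore $\dim\operatorname{Supp}\bigl(\operatorname{H}^{q}_I(M)\bigr)\le n-q$ for all $q$, and in particular $\operatorname{H}^{n}_I(M)$ is $\mathfrak m$-torsion. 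Second, compare with local cohomology at $\mathfrak m$ through the Grothendieck spectral sequence $E_2^{p,q}=\operatorname{H}^{p}_{\mathfrak m}\bigl(\operatorname{H}^{q}_I(M)\bigr)\Rightarrow\operatorname{H}^{p+q}_{\mathfrak m}(M)$, which is available because $\Gamma_{\mathfrak m}\circ\Gamma_I=\Gamma_{\mathfrak m}$ and $\Gamma_I$ carries injectives to $\Gamma_{\mathfrak m}$-acyclics. By the first step, $E_2^{p,q}=0$ whenever $p+q>n$, so no nonzero differential can enter or leave the corner term $E_2^{0,n}=\operatorname{H}^{0}_{\mathfrak m}\bigl(\operatorname{H}^{n}_I(M)\bigr)=\operatorname{H}^{n}_I(M)$; hence $E_\infty^{0,n}=\operatorname{H}^{n}_I(M)$, and $E_\infty^{0,n}$ is a subquotient of the abutment $\operatorname{H}^{n}_{\mathfrak m}(M)$. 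Third, $\operatorname{H}^{n}_{\mathfrak m}(M)$ is Artinian by the classical theorem on local cohomology supported at the maximal ideal, so its subquotient $\operatorname{H}^{n}_I(M)$ is Artinian as well. Combining this with the previous paragraph yields the asserted equivalence. (Alternatively, one may simply invoke the known fact that $\operatorname{H}^{\dim M}_I(M)$ is always Artinian and skip the spectral sequence computation.)
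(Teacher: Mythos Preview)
Your proof is correct. Note, however, that the paper does not supply its own proof of this lemma: it is simply quoted from \cite{Erdogdu-Yildirim}, Theorem~4.4, so there is nothing to compare against. Your argument is self-contained and sound; the reduction to $c\in\{n-1,n\}$ via Lemma~\ref{l2.3}, the non-Artinianness in the case $c=n-1$ from the support-dimension statement there, and the Artinianness of $\operatorname{H}^{n}_I(M)$ via the spectral sequence (or directly from \cite{BS}, Theorem~7.1.6, which the paper itself invokes later in the proof of Corollary~\ref{c2.6}) are all valid.
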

Our next result provides a sufficient condition for a top local cohomology module to be non-Artinian:
\begin{lemma} \label{l2.1}Let $(R, \mathfrak m)$ be a Noetherian local ring, $I$ an ideal of $R$ and $M$ an $R$-module(not necessarily finitely generated) with $\operatorname{cd}(I,M)=c$. If there exists an element $x\in \mathfrak m\setminus I$ such that $\operatorname{cd}(I+Rx,M)\neq c$, then $\operatorname{H}^{c}_I(M)$ is not Artinian.
\end{lemma}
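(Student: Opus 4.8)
The plan is to argue by contradiction: assuming $\operatorname{H}^c_I(M)$ is Artinian, I will show that $\operatorname{cd}(I+Rx,M)=c$, which contradicts the hypothesis. The first ingredient is the standard fact that an Artinian module $A$ over the Noetherian local ring $(R,\mathfrak m)$ is $\mathfrak m$-torsion: for $a\in A$ the descending chain $(\mathfrak m^j a)_j$ stabilizes, say $\mathfrak m^n a=\mathfrak m^{n+1}a$, and Nakayama's lemma applied to the finitely generated module $\mathfrak m^n a$ forces $\mathfrak m^n a=0$. Since $x\in\mathfrak m$, every element of $A$ is annihilated by a power of $x$, so $A_x=0$. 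Applying this to $A=\operatorname{H}^c_I(M)$ gives $\operatorname{H}^c_I(M)_x=0$, equivalently $\operatorname{H}^c_{IR_x}(M_x)=0$, since local cohomology commutes with the flat base change $R\to R_x$.

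The second, and only non-formal, ingredient is a long exact sequence comparing $\operatorname{H}^\bullet_{I+Rx}(M)$, $\operatorname{H}^\bullet_I(M)$ and the localizations $\operatorname{H}^\bullet_I(M)_x$. Writing $I=(a_1,\dots,a_r)$, the \v{C}ech complex on $a_1,\dots,a_r,x$ fits into a short exact sequence of complexes with $\check C^{\bullet-1}(a_1,\dots,a_r;M_x)$ and $\check C^{\bullet}(a_1,\dots,a_r;M)$ (it is, up to shift, the mapping cone of the localization map $\check C^{\bullet}(a_1,\dots,a_r;M)\to\check C^{\bullet}(a_1,\dots,a_r;M_x)$); since these complexes compute local cohomology over a Noetherian ring and localization is exact, the associated long exact sequence is
\begin{equation*}
\cdots\to \operatorname{H}^{i-1}_I(M)_x\to \operatorname{H}^i_{I+Rx}(M)\to \operatorname{H}^i_I(M)\to \operatorname{H}^i_I(M)_x\to \operatorname{H}^{i+1}_{I+Rx}(M)\to\cdots .
\end{equation*}
Alternatively one may obtain the same sequence from the Grothendieck spectral sequence of the composite $\Gamma_{I+Rx}=\Gamma_{Rx}\circ\Gamma_I$, which collapses into a two-column (hence long exact) sequence because $\operatorname{cd}(Rx,-)\le 1$.

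With both ingredients in place the conclusion is immediate. For every $i\ge c+1$ the two neighbouring terms $\operatorname{H}^{i-1}_I(M)_x$ and $\operatorname{H}^i_I(M)$ in the displayed sequence vanish — the latter because $c=\operatorname{cd}(I,M)$, the former because $\operatorname{H}^{i-1}_I(M)$ is either zero (when $i-1>c$) or Artinian, hence $\mathfrak m$-torsion, with vanishing localization (when $i-1=c$) — so $\operatorname{H}^i_{I+Rx}(M)=0$. For $i=c$, the vanishing of $\operatorname{H}^c_I(M)_x$ makes the map $\operatorname{H}^c_{I+Rx}(M)\to\operatorname{H}^c_I(M)$ surjective, and $\operatorname{H}^c_I(M)\ne 0$ since $c=\operatorname{cd}(I,M)$; hence $\operatorname{H}^c_{I+Rx}(M)\ne 0$. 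Therefore $\operatorname{cd}(I+Rx,M)=c$, contradicting the assumption that $\operatorname{cd}(I+Rx,M)\ne c$.

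The only step requiring genuine care is the displayed long exact sequence; in the write-up I would either cite it from a standard reference on local cohomology or include the short \v{C}ech-complex derivation sketched above. I would also note explicitly where the hypotheses enter: $x\in\mathfrak m$ is essential for the torsion argument of the first ingredient (if $x$ were a unit, $I+Rx=R$ and the statement would fail for, e.g., $M$ finitely generated with $\operatorname{H}^c_I(M)=\operatorname{H}^{\dim M}_{\mathfrak m}(M)$), whereas $x\notin I$ is used only to ensure that the condition $\operatorname{cd}(I+Rx,M)\ne c$ is not vacuous.
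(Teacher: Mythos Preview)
Your proof is correct and rests on the same long exact sequence the paper uses. The difference is in organization: the paper argues directly by splitting into the two cases $\operatorname{cd}(I+Rx,M)=c+1$ and $\operatorname{cd}(I+Rx,M)<c$, citing an external reference (Dibaei--Vahidi) for the first case and reading off $\operatorname{H}^c_I(M)_x\cong\operatorname{H}^c_I(M)\neq 0$ from the sequence in the second. Your contrapositive formulation handles both cases at once with the same sequence and the observation that Artinian implies $\mathfrak m$-torsion, so your write-up is self-contained where the paper's is not. The trade-off is that the paper's second case yields slightly more, namely the explicit isomorphism $\operatorname{H}^c_I(M)_x\cong\operatorname{H}^c_I(M)$, though this is not used later.
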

\begin{proof} Let $x\in \mathfrak m\setminus I$ such that $\operatorname{cd}(I+Rx,M)\neq c$. Then it follows from the fact $\operatorname{cd}(I+Rx,M)\leq \operatorname{cd}(I,M)+1$ that either $\operatorname{cd}(I+Rx,M)=c+1$ or $\operatorname{cd}(I+Rx,M)<c$. If $\operatorname{cd}(I+Rx,M)=c+1$, then the result follows from Corollary 4.1 of \cite{Dibaei}. Now suppose that $\operatorname{cd}(I+Rx,M)<c$. Then
it follows from the following exact sequence
\begin{equation*}\xymatrix{\cdots \ar[r]& {\underbrace{\operatorname{H}^{c}_{I+Rx}(M)}_{=0}}\ar[r]&{\operatorname{H}^{c}_{I} (M)}\ar[r]& {\operatorname{H}^{c}_{I} (M)}_x \ar[r]& {\underbrace{\operatorname{H}^{c+1}_{I+Rx}(M)}_{=0}} \cdots}
\end{equation*}
that ${\operatorname{H}^{c}_{I} (M)}_x\cong {\operatorname{H}^{c}_{I} (M)}\neq 0$ and so $\dim \operatorname{Supp}({\operatorname{H}^{c}_{I} (M)})\not\subset \{\mathfrak m \}$. Therefore $\operatorname{H}^{c}_I(M)$ is not Artinian.  
 \end{proof}
Let the notations be as in Lemma \ref{l2.3}. The following result  shows that, in many instances,  $\operatorname{H}^{\operatorname{cd}(I,M)}_I(M)$ is non-Artinian in the case when $\operatorname{cd}(I,M)=n-d+1$.
\begin{theorem}\label{t2.2}
Let $(R, \mathfrak m)$ be a Noetherian local ring, $I$ an ideal of $R$ and $M$ a finitely generated $R$-module of dimension $n$ with $\dim(M/IM)=d>1$ and $\operatorname{cd}(I,M)=n-d+1$.  If either
\begin{itemize}
\item[(i)] there exists an element $x\in \mathfrak m\setminus I$ such that $\operatorname{cd}(I+Rx,M)\neq n-d+1$, or
\item[(ii)] $\dim \operatorname{Supp}(\operatorname{H}^{n-d}_I(M))<d$,
\end{itemize}
then $\operatorname{H}^{n-d+1}_I(M)$ is not Artinian.
\end{theorem}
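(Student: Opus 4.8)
The plan is to treat the two hypotheses by entirely different means. Hypothesis (i) needs nothing new: with $c:=\operatorname{cd}(I,M)=n-d+1$, the element $x\in\mathfrak m\setminus I$ provided by (i) satisfies $\operatorname{cd}(I+Rx,M)\ne c$, so Lemma~\ref{l2.1} applies verbatim and gives that $\operatorname{H}^{n-d+1}_I(M)=\operatorname{H}^c_I(M)$ is not Artinian. The real content is hypothesis (ii), and there the strategy is to reproduce, one cohomological degree higher, the isomorphism underlying Lemma~\ref{l2.3}, namely to show that (ii) forces
$$\operatorname{H}^{d-1}_{\mathfrak m}\bigl(\operatorname{H}^{n-d+1}_I(M)\bigr)\;\cong\;\operatorname{H}^{n}_{\mathfrak m}(M),$$
from which non-Artinianness is immediate.

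For this I would use the Grothendieck spectral sequence attached to the identity $\Gamma_{\mathfrak m}=\Gamma_{\mathfrak m}\circ\Gamma_I$ (legitimate since $I\subseteq\mathfrak m$ and $\Gamma_I$ preserves injective modules over a Noetherian ring):
$$E_2^{p,q}=\operatorname{H}^p_{\mathfrak m}\bigl(\operatorname{H}^q_I(M)\bigr)\;\Longrightarrow\;\operatorname{H}^{p+q}_{\mathfrak m}(M).$$
Two vanishing facts control the antidiagonal $p+q=n$. First, $\operatorname{H}^q_I(M)=0$ for $q>n-d+1$ by the definition of $\operatorname{cd}(I,M)$, which kills $E_2^{p,q}$ for $p\le d-2$. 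Second, $\operatorname{Supp}(\operatorname{H}^q_I(M))\subseteq V(I)\cap\operatorname{Supp}(M)=\operatorname{Supp}(M/IM)$ has dimension at most $d$ for every $q$, so $\operatorname{H}^p_{\mathfrak m}(\operatorname{H}^q_I(M))=0$ for $p>d$, killing $E_2^{p,q}$ for $p\ge d+1$. Hence on the line $p+q=n$ only $E_2^{d-1,\,n-d+1}$ and $E_2^{d,\,n-d}$ survive, and hypothesis (ii), i.e.\ $\dim\operatorname{Supp}(\operatorname{H}^{n-d}_I(M))<d$, annihilates $E_2^{d,\,n-d}=\operatorname{H}^d_{\mathfrak m}(\operatorname{H}^{n-d}_I(M))$. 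Since $E_\infty^{d-1,\,n-d+1}$ is then the only possibly nonzero $E_\infty$-term on this antidiagonal, the abutment filtration gives $\operatorname{H}^n_{\mathfrak m}(M)\cong E_\infty^{d-1,\,n-d+1}$, a subquotient of $E_2^{d-1,\,n-d+1}=\operatorname{H}^{d-1}_{\mathfrak m}(\operatorname{H}^{n-d+1}_I(M))$; a short differential count (no nonzero $d_r$ can enter $E_r^{d-1,\,n-d+1}$, whose source is a subquotient of $\operatorname{H}^{\ast}_{\mathfrak m}(\operatorname{H}^{n-d+r}_I(M))=0$, nor leave it, since the target is a subquotient of $\operatorname{H}^{d-1+r}_{\mathfrak m}(\operatorname{H}^{\ast}_I(M))=0$) upgrades this to the displayed isomorphism, but already its nonvanishing is what we need. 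Finally $\operatorname{H}^n_{\mathfrak m}(M)\ne 0$ by Grothendieck's non-vanishing theorem (as $\dim M=n$), and $d>1$ gives $d-1\ge 1$; since an Artinian $R$-module is $\mathfrak m$-torsion and so has vanishing local cohomology in all positive degrees, $\operatorname{H}^{n-d+1}_I(M)$ is not Artinian.

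The step I expect to require the most care is the bound $\operatorname{H}^p_{\mathfrak m}(\operatorname{H}^q_I(M))=0$ for $p>d$: since the $\operatorname{H}^q_I(M)$ are typically not finitely generated, one cannot quote Grothendieck's vanishing theorem in its usual form, but must use its extension to arbitrary modules, which follows because $\operatorname{H}^q_I(M)$ is the direct limit of its finitely generated submodules (each of dimension $\le d$) and local cohomology commutes with direct limits. Everything else is bookkeeping with the spectral sequence. It is worth recording where $d>1$ is used — solely in placing $\operatorname{H}^{d-1}_{\mathfrak m}(-)$ in positive degree — which is exactly the reason the case $d=1$ is excluded here and handled separately in Lemma~\ref{l2.4}.
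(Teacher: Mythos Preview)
Your proof is correct and follows essentially the same route as the paper: Lemma~\ref{l2.1} for (i), and the Grothendieck spectral sequence $E_2^{p,q}=\operatorname{H}^p_{\mathfrak m}(\operatorname{H}^q_I(M))\Rightarrow\operatorname{H}^{p+q}_{\mathfrak m}(M)$ at $p+q=n$ for (ii), concluding from $\operatorname{H}^{d-1}_{\mathfrak m}(\operatorname{H}^{n-d+1}_I(M))\cong\operatorname{H}^n_{\mathfrak m}(M)\neq 0$. Your write-up is in fact more careful than the paper's, which asserts the degeneration without explicitly checking the vanishing of the other $E_2$-terms on the antidiagonal or the differentials.
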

\begin{proof} If $(i)$ holds, then the result follows from Lemma \ref{l2.1}. Now suppose that $\dim \operatorname{Supp}(\operatorname{H}^{n-d}_I(M))<d$ and consider the spectral sequence 
\begin{equation*} 
E^{p,q}_2=\operatorname{H}^p_{\mathfrak m}(\operatorname{H}^q_I(M))  \Longrightarrow \operatorname{H}^{p+q}_{\mathfrak m}(M)
\end{equation*}
and look at the stage $p+q=n$. Since $\operatorname{H}^{n}_{\mathfrak m}(M)\neq 0$, either $\operatorname{H}^d_{\mathfrak m}(\operatorname{H}^{n-d}_I(M))\neq 0$, or $\operatorname{H}^{d-1}_{\mathfrak m}(\operatorname{H}^{n-d+1}_I(M))\neq 0$. Since $\dim \operatorname{Supp}(\operatorname{H}^{n-d}_I(M))<d$, it follows from Grothendieck's vanishing theorem that $\operatorname{H}^d_{\mathfrak m}(\operatorname{H}^{n-d}_I(M))= 0$ and so the spectral sequence degenerates to an isomorphism $\operatorname{H}^{d-1}_{\mathfrak m}(\operatorname{H}^{n-d+1}_I(M))\cong \operatorname{H}^{n}_{\mathfrak m}(M)$. Hence $\operatorname{H}^{n-d+1}_I(M)$ is not Artinian.
\end{proof}

\section{The Main Results}
We begin this section with the following result which provides conditions under which Conjecture \ref{conj1} holds true:
\begin{theorem} \label{t2.4}
Let $(R,\mathfrak m)$ be a Noetherian local unique factorization domain of dimension three and $M$ a finitely generated faithful $R$-module. Then Conjecture  \ref{conj1} holds true.
\end{theorem}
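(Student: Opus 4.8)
The plan is to cut the statement down to a single nontrivial case and then handle that case by exhibiting an explicit element $x$ to which Theorem \ref{t2.2}(i) applies. First I would record the standing reductions. Since $M$ is faithful, $\operatorname{Supp}(M)=\operatorname{Spec}(R)$, so $\dim M=\dim R=3$; thus $n=3$ in the notation of Conjecture \ref{conj1}, and $\dim(M/IM)=\dim(R/I)$ for every ideal $I$. Given $I$ as in the conjecture, $d=\dim(R/I)>1$ leaves only $d\in\{2,3\}$, and $d=3$ would mean $\dim(R/I)=\dim R$, hence $I=0$ because $R$ is a domain, contradicting $\operatorname{cd}(I,M)=n-d+1=1$. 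So only $d=2$ survives, and there we must show that $\operatorname{H}^2_I(M)$ is not Artinian, knowing $\operatorname{cd}(I,M)=2$. Note that alternative (ii) of Theorem \ref{t2.2} is typically unavailable here: if $\mathfrak p\supseteq I$ has height one (such $\mathfrak p$ exist because $\dim(R/I)=2$), then $\operatorname{H}^1_I(M)_{\mathfrak p}=\operatorname{H}^1_{\mathfrak p R_{\mathfrak p}}(M_{\mathfrak p})\neq 0$ since $M_{\mathfrak p}$ is a finitely generated faithful module over the one-dimensional local ring $R_{\mathfrak p}$, so $\dim\operatorname{Supp}(\operatorname{H}^1_I(M))=2=d$. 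Hence the route must be via (i).

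Next I would analyse the minimal primes of $I$. As $\dim(R/I)=2$ and $R$ is a three-dimensional local domain, every minimal prime of $I$ has height $1$ or $2$. Because $R$ is a UFD, the height-one minimal primes are principal, say $\mathfrak p_1=(f_1),\dots,\mathfrak p_r=(f_r)$ with the $f_i$ pairwise non-associate irreducibles; since $\dim(R/I)=2$ there is at least one, so $r\ge 1$. Put $f:=f_1\cdots f_r$, so $(f)=\bigcap_{i=1}^r(f_i)$ and $\sqrt I\subseteq (f)$. The candidate element is $x:=f$.

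Three points then need checking. First, $f\in\mathfrak m\setminus I$: it is a product of nonunits, hence in $\mathfrak m$; and if $f\in I$ then $(f)\subseteq I\subseteq\sqrt I\subseteq(f)$ gives $I=(f)$, whence $\operatorname{cd}(I,M)=\operatorname{cd}((f),M)\le 1$, contradicting $\operatorname{cd}(I,M)=2$. Second, $\sqrt{I+Rf}=\sqrt{(f)}$: one has $V(I+Rf)=V(I)\cap V(f)$, and writing $V(I)$ as the union of the $V(\mathfrak p_i)$ together with the $V(\mathfrak q)$ over the height-two minimal primes $\mathfrak q$ of $I$, we get $V(\mathfrak p_i)\subseteq V(f)$ while $V(\mathfrak q)\cap V(f)\subseteq\{\mathfrak m\}\subseteq V(f)$, because distinct minimal primes are incomparable so $f_i\notin\mathfrak q$ and thus $f\notin\mathfrak q$, and $\dim(R/\mathfrak q)=1$ forces $V(\mathfrak q)=\{\mathfrak q,\mathfrak m\}$; hence $V(I+Rf)=\bigcup_iV(\mathfrak p_i)=V(f)$. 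Third, $(f)$ is principal, so $\operatorname{H}^j_{(f)}(M)=0$ for $j\ge 2$, and by the previous point the same holds with $I+Rf$ in place of $(f)$; therefore $\operatorname{cd}(I+Rf,M)\le 1\neq 2=n-d+1$.

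With $x=f$ established, Theorem \ref{t2.2}(i) (equivalently Lemma \ref{l2.1}) yields that $\operatorname{H}^{2}_I(M)=\operatorname{H}^{n-d+1}_I(M)$ is not Artinian; together with the vacuousness of the case $d=3$, this proves Conjecture \ref{conj1} under the stated hypotheses. I expect the only delicate point to be the bookkeeping in the second check above — confirming that the "extra" height-two minimal primes of $I$ contribute nothing to $V(I+Rf)$ beyond the closed point — and it is precisely there that both hypotheses are consumed: "$R$ is a UFD" to produce $f$, and "$\dim R=3$" so that the height-two minimal primes are one-codimensional away from $\mathfrak m$. Beyond this I do not anticipate a genuine obstacle.
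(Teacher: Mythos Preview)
Your argument is correct and follows the same strategy as the paper: produce an element $x\in\mathfrak m\setminus I$ with $\operatorname{cd}(I+Rx,M)\le 1$ and invoke Lemma~\ref{l2.1}. The paper's execution is shorter, however: it observes that $\operatorname{ht}(I)=1$, picks a \emph{single} height-one prime $\mathfrak p=(x)\supseteq I$, and uses the containment $I\subseteq(x)$ to conclude $I+Rx=(x)$ outright, whence $\operatorname{cd}(I+Rx,M)\le\operatorname{ara}((x))=1$; there is no need to form the product $f=f_1\cdots f_r$ or to analyse the contribution of height-two minimal primes to $V(I+Rf)$. Your choice $x=f$ works, but all of the ``delicate bookkeeping'' you anticipated disappears once one notices that $I\subseteq(f_1)$ already gives $I+Rf_1=(f_1)$ directly.
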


\begin{proof} Let $I$ be an ideal of $R$ with $\operatorname{cd}(I,M)=\dim (M/IM)=2$. Since $I$ is a nonzero ideal and $\operatorname{ht}(I)=\operatorname{ht}I(R/{\operatorname{AnnM}})=\operatorname{ht}_M(I)\leq \dim(M)-\dim(M/IM)=1$, it follows that $\operatorname{ht}(I)=1$ and so $I\subseteq \mathfrak p$ for some height one prime ideal $\mathfrak p$ of $R$. But then since $R$ is a UFD, $\mathfrak p=(x)$ for some $x\in \mathfrak m\setminus I$. Therefore $\operatorname{cd}(I+Rx,M)\leq \operatorname{ara}(I+Rx)=\operatorname{ara}(Rx)=1<2=\operatorname{cd}(I,M)$ and so the result follows from Lemma \ref{l2.1}.
\end{proof}
\begin{corollary} \label{c2.6}
Let $(R,\mathfrak m)$ be a Noetherian local unique factorization domain and $M$ a finitely generated faithful $R$-module of dimension three. Then $\operatorname{H}^{\operatorname{cd}(I,M)}_I(M)$ is Artinian if and only if $\operatorname{cd}(I,M)=3$.
\end{corollary}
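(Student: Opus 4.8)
The plan is to deduce Corollary \ref{c2.6} from Theorem \ref{t2.4} together with the two lemmas recalled in the preliminaries, by a case analysis on $c=\operatorname{cd}(I,M)$ according to $d=\dim(M/IM)$. Since $M$ is finitely generated and faithful over the three-dimensional ring $R$, we have $\dim M=\dim R=3$, so by Lemma \ref{l2.3} we know $c\geq 3-d$ and the possible values of $d$ are $0,1,2,3$. The ``if'' direction is immediate: if $c=3=\dim M$, then $\operatorname{H}^{3}_I(M)$ is a quotient of $\operatorname{H}^{3}_{\mathfrak m}(M)$-type behaviour — more directly, a top local cohomology module $\operatorname{H}^{\dim M}_I(M)$ is always Artinian (it is $\mathfrak m$-torsion; indeed $\operatorname{Supp}\operatorname{H}^{\dim M}_I(M)\subseteq\{\mathfrak m\}$), so there is nothing to prove there. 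The work is entirely in the ``only if'' direction.

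For the ``only if'' direction, assume $c=\operatorname{cd}(I,M)\leq 2$ and I must show $\operatorname{H}^{c}_I(M)$ is not Artinian. First dispose of $d=0$: then $I$ is $\mathfrak m$-primary, so $\operatorname{H}^{c}_I(M)=\operatorname{H}^{c}_{\mathfrak m}(M)$, and if this is nonzero and $c<\dim M=3$ it is still possibly Artinian — wait, local cohomology at $\mathfrak m$ is always Artinian, so in fact $d=0$ cannot occur under the hypothesis $c\le 2$ unless one shows $c$ must equal $3$; but by Lemma \ref{l2.3}, $d=0$ forces $c\ge 3$, hence $c=3$, excluded. Next, $d=1$: Lemma \ref{l2.4} applies directly and tells us $\operatorname{H}^{c}_I(M)$ is Artinian iff $c=\dim M=3$; since we are assuming $c\le 2$, it is not Artinian. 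Finally, $d=2$: by Lemma \ref{l2.3}, $c\ge 1$, and $d=2>1$, so Theorem \ref{t2.4} (which establishes Conjecture \ref{conj1} in precisely this setting: $R$ a three-dimensional Noetherian local UFD, $M$ finitely generated faithful) gives non-Artinianness when $c=n-d+1=3-2+1=2$. The remaining sub-case is $d=2$ with $c=1$, i.e.\ $c=n-d$; here Lemma \ref{l2.3} yields $\dim\operatorname{Supp}(\operatorname{H}^{1}_I(M))=d=2>0$, so $\operatorname{Supp}(\operatorname{H}^{1}_I(M))\not\subseteq\{\mathfrak m\}$ and $\operatorname{H}^{1}_I(M)$ is not Artinian. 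The case $d=3$ forces $I\subseteq\operatorname{nil}$, i.e.\ $I=0$ (as $R$ is a domain), so $\operatorname{H}^{c}_I(M)=0$ for $c\ge 1$ and $c=\operatorname{cd}(0,M)=0$ trivially, which is again not of the form $c=3$ and indeed $\operatorname{H}^0_0(M)=M$ is not Artinian since $\dim M=3>0$ — so this degenerate case is consistent with the statement.

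Assembling these, for every value of $d\in\{0,1,2,3\}$ and every admissible $c\le 2$, the module $\operatorname{H}^{c}_I(M)$ fails to be Artinian, while $c=3$ always gives an Artinian module; this is exactly the claimed equivalence. I expect the main obstacle to be bookkeeping rather than depth: one must be careful that $\dim M=3$ (using faithfulness so that $\operatorname{Ann} M=0$ and $\dim M=\dim R$) and that the ranges of $c$ dictated by Lemma \ref{l2.3} are invoked correctly in each case, in particular making sure the only scenario producing an Artinian top cohomology is $c=\dim M$. A secondary point worth stating explicitly is why $\operatorname{H}^{\dim M}_I(M)$ is always Artinian — this is standard (its support is contained in $\{\mathfrak m\}$, or one cites the known result that the top local cohomology module relative to the maximal-dimension case is Artinian), and I would include a one-line justification or citation so the ``if'' direction is not left dangling.
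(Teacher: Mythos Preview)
Your proposal is correct and follows essentially the same case analysis as the paper: cite Artinianness of $\operatorname{H}^{\dim M}_I(M)$ for the ``if'' direction, then for $c<3$ split on $d=\dim(M/IM)$, invoking Lemma \ref{l2.4} for $d=1$, Lemma \ref{l2.3} for $d=2,\,c=1$, Theorem \ref{t2.4} for $d=2,\,c=2$, and noting $I=0$ when $d=3$. Apart from the stream-of-consciousness digression in the $d=0$ case (which the paper simply absorbs into the observation that $c<3$ forces $d>0$ via Lemma \ref{l2.3}), your argument matches the paper's proof.
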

\begin{proof}
Let $I$ an ideal of $R$ with $\dim (M/IM)=d$. Knowing that $\operatorname{H}^{3}_I(M)$ is always Artinian [\cite{BS}, Theorem 7.1.6 ], we may, and do, assume that $c<3$ and so $d>0$. Now if $d=1$, then the result follows from Lemma \ref{l2.4}. If, on the other hand, $d=3$, then $I$ is a zero ideal and so $\operatorname{H}^{c}_I(M)= \operatorname{H}^{0}_I(M)=M$ is non-Artinian. Therefore it only remains us to consider the case when $d=2.$ In this case, from Lemma \ref{l2.3}, we have that $n-d=3-2=1$ is a lower bound for $c$ and if, in particular, $c=1$, then $\operatorname{H}^{c}_I(M)$ is non-Artinian. Finally if $c=d=2$, then the result follows from Theorem \ref{t2.4}.
\end{proof}
An important question related to local cohomology theory is to determine when the set of associated prime ideals of a local cohomology module is finite [Fourth problem, \cite{Huneke}]. In many instances, 
the answer to this question is well-known; see eg. \cite{Bahmanpour-Naghipour, Lyu93, Lyu00, Marley}.
Regarding this question, Katzman \cite{Katzman}, Singh \cite{Singh} and later Swanson and Singh \cite{Singh-Swanson} provided some examples of local cohomology modules with infinite set of associated primes. But it still remains an open question that whether the sets of primes that are minimal in the support of local cohomology modules are always finite. This is equivalent to ask whether the support of local cohomology modules of Noetherian rings (or modules) must be Zariski-closed subsets of Spec R. Recently Huneke, Katz and Marley \cite{HKM} and later Khashyarmanesh \cite{Khash} give several positive answers to this question.\\
Our following result shows that if Conjecture \ref{conj1} holds true and the  above open question has an affirmative answer, then all top local cohomology modules of a finitely generated module $M$ are non-Artinian except in the trivial case when $\operatorname{cd}(I,M)= \dim M$.
\begin{theorem} \label{t2.5} Let $(R, \mathfrak m)$ be a Noetherian local ring and $M$ a finitely generated $R$-module of dimension $n$ such that the support of local cohomology modules of $M$ are Zariski-closed subsets of $SpecR$. 
If Conjecture \ref{conj1} holds true, then all top local cohomology modules, $\operatorname{H}^{\operatorname{cd}(I,M)}_I(M)$, are non-Artinian for all ideals $I$ of $R$  with $\operatorname{cd}(I,M)=c<n$.
\end{theorem}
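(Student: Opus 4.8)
I would prove Theorem \ref{t2.5} by downward induction on $c = \operatorname{cd}(I,M)$, or more directly by a case analysis on $d = \dim(M/IM)$, reducing everything to the two earlier tools: Lemma \ref{l2.4} (the case $d=1$) and Theorem \ref{t2.2} (the case $d>1$). Fix an ideal $I$ with $\operatorname{cd}(I,M) = c < n$. Since $c < n$ and $n - d \le c$ (Lemma \ref{l2.3}), we have $d \ge 1$; the case $d = 1$ is immediate from Lemma \ref{l2.4}, since $c < n$. So the real content is the case $d > 1$, where we want to invoke Conjecture \ref{conj1}. The conjecture applies directly only when $c = n - d + 1$, so the strategy is: if $c = n-d$, then $\operatorname{H}^c_I(M)$ is non-Artinian because $\dim\operatorname{Supp}(\operatorname{H}^{n-d}_I(M)) = d > 1 > 0$ by Lemma \ref{l2.3}, hence its support is not contained in $\{\mathfrak m\}$; if $c = n-d+1$, apply Conjecture \ref{conj1}; and if $c \ge n-d+2$, I need to produce a contradiction or reduce to a smaller configuration using the Zariski-closedness hypothesis.

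For the remaining range $c \ge n-d+2$ I would argue as follows. The idea is to enlarge $I$ by a single element $x \in \mathfrak m$ to cut down $d$ by one while keeping $c$ fixed, so as to land in a situation the conjecture or the earlier lemmas cover. Concretely: since $\dim(M/IM) = d > 1$, and since $\operatorname{Supp}(M/IM) = V(I + \operatorname{Ann} M)$ is a closed set of dimension $d \ge 2$, I can choose $x \in \mathfrak m$ avoiding all the minimal primes $\mathfrak p$ of $I + \operatorname{Ann} M$ with $\dim R/\mathfrak p = d$ (there are finitely many, and none equals $\mathfrak m$), so that $\dim(M/(I+Rx)M) = d - 1$. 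Now apply Lemma \ref{l2.1}: if $\operatorname{cd}(I+Rx, M) \neq c$ then $\operatorname{H}^c_I(M)$ is non-Artinian and we are done; otherwise $\operatorname{cd}(I+Rx,M) = c$, and setting $I' = I + Rx$, $d' = d-1$, we have $c = \operatorname{cd}(I',M)$ with $\dim(M/I'M) = d'$. Since $n - d' = n-d+1 \le c - 1 < c$, we have strictly decreased $d$ while keeping $c$; iterating, after finitely many steps either Lemma \ref{l2.1} terminates the argument, or we reach $d' = 1$ (apply Lemma \ref{l2.4}, using $c < n$) or $c = n - d' + 1$ (apply Conjecture \ref{conj1}) or $c = n - d'$ (use Lemma \ref{l2.3} as above).

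The point where the Zariski-closedness hypothesis enters — and the main obstacle — is justifying the non-Artinianness conclusion in Lemma \ref{l2.1}'s "otherwise" branch when the iteration does not terminate early: I must ensure that at each stage $\operatorname{H}^c_I(M)$ being Artinian would force $\operatorname{Supp}(\operatorname{H}^c_I(M)) \subseteq \{\mathfrak m\}$, and then use the closedness to transfer information. Actually the cleanest route is the reverse: assume for contradiction that $\operatorname{H}^c_I(M)$ is Artinian, so $\operatorname{Supp}(\operatorname{H}^c_I(M)) \subseteq \{\mathfrak m\}$; by the hypothesis all these supports are closed anyway, so this is automatic, and I instead exploit that for every $x \in \mathfrak m \setminus I$ localization at $x$ kills $\operatorname{H}^c_I(M)$, forcing via the Mayer–Vietoris / long exact sequence of Lemma \ref{l2.1} that $\operatorname{cd}(I+Rx,M) = c$ for all such $x$; then running the height/support bookkeeping above shows $d$ can be decreased all the way to $1$ with $c$ unchanged, contradicting Lemma \ref{l2.4} since $c < n$. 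The delicate step is verifying that one can always find the element $x$ decreasing $\dim(M/IM)$ while staying inside $\mathfrak m$ and outside $I$ — this uses prime avoidance plus the fact that $d > 1$ guarantees the relevant top-dimensional primes of $I + \operatorname{Ann} M$ are proper subsets of $\mathfrak m$ — and bookkeeping that the hypothesis "supports are Zariski-closed" is genuinely what licenses reading off $\operatorname{Supp}(\operatorname{H}^c_I(M)) \not\subseteq \{\mathfrak m\}$ from non-vanishing after localization. I expect this support-chasing, rather than the spectral sequence input (which is already packaged in Theorem \ref{t2.2}), to be the part requiring the most care.
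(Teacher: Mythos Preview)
Your iteration has a genuine gap: you never connect $\operatorname{H}^{c}_{I}(M)$ to $\operatorname{H}^{c}_{I'}(M)$ for $I' = I + Rx$. In the direct version, when you reach an $I'$ with $\operatorname{H}^{c}_{I'}(M)$ non-Artinian (via Lemma~\ref{l2.4}, Lemma~\ref{l2.3}, or Conjecture~\ref{conj1}), nothing lets you pull this back to $\operatorname{H}^{c}_{I}(M)$. In the contradiction version the same problem appears one step earlier: from $\operatorname{H}^{c}_{I}(M)$ Artinian you correctly get $\operatorname{cd}(I+Rx,M)=c$ for all $x\in\mathfrak m\setminus I$, but to iterate you would need $\operatorname{H}^{c}_{I+Rx}(M)$ Artinian as well, and the long exact sequence only gives a surjection $\operatorname{H}^{c}_{I+Rx}(M)\twoheadrightarrow \operatorname{H}^{c}_{I}(M)$ with kernel a quotient of $\operatorname{H}^{c-1}_{I}(M)_x$, which need not vanish for a generic $x$.

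You have also misidentified where the Zariski-closedness hypothesis is used. It is not needed to read off $\operatorname{Supp}\bigl(\operatorname{H}^{c}_{I}(M)\bigr)\not\subseteq\{\mathfrak m\}$ from non-vanishing after localization (that is automatic). The hypothesis is applied to $\operatorname{H}^{c-1}_{I}(M)$: closedness of its support means $\operatorname{Min}_R\bigl(\operatorname{H}^{c-1}_{I}(M)\bigr)=\{\mathfrak q_1,\dots,\mathfrak q_m\}$ is finite, so one may set $K=\bigcap_i\mathfrak q_i$ and choose $x\in K$, which forces $\operatorname{H}^{c-1}_{I}(M)_x=0$. For such $x$ the long exact sequence yields an \emph{injection} $\operatorname{H}^{c}_{I+Rx}(M)\hookrightarrow \operatorname{H}^{c}_{I}(M)$, and now induction on $d$ (not iteration) gives the desired non-Artinianness. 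The remaining work, which your outline does not anticipate, is the dichotomy: either some $x\in K\setminus(I+\operatorname{Ann}M)$ drops $\dim(M/(I+Rx)M)$ below $d$ (and induction applies), or else prime avoidance forces $\dim\operatorname{Supp}\bigl(\operatorname{H}^{c-1}_{I}(M)\bigr)=d$, whence a known dimension bound on supports of local cohomology gives $c\le n-d+1$ and Conjecture~\ref{conj1} (or Lemma~\ref{l2.3}) finishes.
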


\begin{proof}Let $I$ be an ideal of $R$ with $\dim(M/IM)=d$ and $\operatorname{cd}(I,M)=c<n$. If $c=0$, then it follows from Lemma \ref{l2.3} that $\dim \operatorname{Supp}(\operatorname{H}^0_I(M))=d=n>0$ and so $\operatorname{H}^0_I(M)$ is non-Artinian. Therefore we may, and do, assume that $c>0$ and use induction on $d$ to prove the result.\\
The case $d=1$ follows from Lemma \ref{l2.4}. So suppose, inductively, that $d>1$ and $\operatorname{H}^{\operatorname{cd}(J,M)}_J(M)$  is non-Artinian for all ideals $J$ of $R$ with  $0<\operatorname{cd}(J,M)<n$ and $\dim(M/JM)<d$. \\
We now consider the local cohomology module $\operatorname{H}^{c-1}_I(M)$.\\
\textit{Case 1.} Suppose that $\operatorname{H}^{c-1}_I(M)=0$ and choose an element $x\in \mathfrak m\setminus I$ such that $\dim(M/{(I+Rx)M})=d-1$. If $\operatorname{cd}(I+Rx,M)\neq c$, then the result follows from Lemma \ref{l2.1}. If, on the other hand, $\operatorname{cd}(I+Rx,M)=c$, then, by induction hypothesis, $\operatorname{H}^{c}_{I+Rx}(M)$ is non-Artinian and so from the following exact sequence,
\begin{equation*}\xymatrix{\cdots \ar[r]& {\underbrace{\operatorname{H}^{c-1}_{I}(M_x)}_{=0}}\ar[r]&{\operatorname{H}^{c}_{I+Rx} (M)}\ar[r]& {\operatorname{H}^{c}_{I} (M)} \ar[r]& \cdots}
\end{equation*}
we obtain that ${\operatorname{H}^{c}_{I} (M)}$ is non-Artinian and hence the result follows. \\
\textit{Case 2}. Now suppose that $\operatorname{H}^{c-1}_I(M)\neq 0$. Then from the hypothesis, it follows that the set of minimal prime ideals of $\operatorname{H}^{c-1}_I(M)$, denoted by $\operatorname{Min}_R(\operatorname{H}^{c-1}_I(M))$, is finite.\\
Let now $\operatorname{Min}_R(\operatorname{H}^{c-1}_I(M))=\{\mathfrak q_1,\mathfrak q_2, \cdots, \mathfrak q_m \} $ 
and consider the ideal $K=\cap_{i=1}^m \mathfrak q_i\supseteq I+\operatorname{Ann}M$. It is worth noting that, for any $x\in K$,  $\operatorname {Ass}(\operatorname{H}^{c-1}_{I}(M_x))=\{\mathfrak qR_x: x\notin \mathfrak q\ and\ \mathfrak q\in \operatorname {Ass}(\operatorname{H}^c_I(M))\}= \emptyset$ and since $R$ is Noetherian, $\operatorname{H}^{c-1}_{I}(M_x)=0$. \\
We now need to consider three subcases:
\textit{Case 2(i)}. If there exists an element $x\in K\setminus (I+\operatorname{Ann}M)$ such that\\ $\dim(M/{(I+Rx)M}) < d$, then the result follows by applying the same arguments as the one used in Case 1.\\
\textit{Case 2(ii)}. If on the other hand, $\dim(M/{(I+Rx)M})= d$ for all $x\in K\setminus (I+\operatorname{Ann}M)$, then $K\subseteq \cup_{i=1}^t\{\mathfrak p_i \in \operatorname{Min}_R(I+\operatorname{Ann}M) : \ \dim(R/{\mathfrak {p_i}})=d \}$ and so it follows from prime avoidance lemma that $K=\cap_{i=1}^m \mathfrak q_i\subseteq {\mathfrak {p_i}}$ for some $i$. But then by Proposition 1.11.(ii) of \cite{Atiyah}, we have that $\mathfrak q_i\subseteq {\mathfrak {p_i}}$ and then since $\mathfrak {p_i}$ is a minimal prime ideal of $I+\operatorname{Ann}M$, it follows that $\mathfrak q_i = {\mathfrak {p_i}}$. Therefore $\dim\operatorname{Supp}(\operatorname{H}^{c-1}_I(M))=d$.\\
 Now, it follows from Theorem 2.3 of \cite{Saremi} that $d=\dim\operatorname{Supp}(\operatorname{H}^{c-1}_I(M))\leq n-c+1$ which then implies that $c\leq n-d+1$. But then it follows from Lemma \ref{l2.3} that either $\operatorname{cd}(I,M)=n-d$ or $\operatorname{cd}(I,M)=n-d+1$, and if, moreover, $\operatorname{cd}(I,M)=n-d$, then $\operatorname{H}^{\operatorname{cd}(I,M)}_I(M)$ is non-Artinian. For the remaining case when $\operatorname{cd}(I,M)=n-d+1$, the result follows from our hypothesis.\\
\textit{Case 2(iii)}. Finally, if $K=\cap_{i=1}^m \mathfrak q_i= I+\operatorname{Ann}M$, then $\operatorname{Min}_R(I+\operatorname{Ann}M)\subseteq \operatorname{Min}_R(\operatorname{H}^{c-1}_I(M))$ and so $\operatorname{Supp}(\operatorname{H}^{c-1}_I(M))=V(I+\operatorname{Ann}M)$ which then implies that $d=\dim\operatorname{Supp}(\operatorname{H}^{c-1}_I(M))\leq n-c+1$. The result follows by using the arguments of the previous case.
\end{proof}
We end this paper with the following Corollary which provides conditions under which all top local cohomology modules, but the ones with $\operatorname{cd}(I,M)=\dim M$, are non-Artinian when the underlying ring $R$ is of dimension four.
\begin{corollary} Let $(R,\mathfrak m)$ be a Noetherian local unique factorization domain of dimension four and $M$ a finitely generated faithful $R$-module. Then the following conditions are equivalent:
\begin{itemize}
\item[(i)] All top local cohomology modules, $\operatorname{H}^{\operatorname{cd}(I,M)}_I(M)$, are non-Artinian for all ideals $I$ of $R$ with $\operatorname{cd}(I,M)\neq \dim M$.
\item[(ii)] $\operatorname{H}^{3}_I(M)$ is non-Artinian for every height two ideal $I$ of $R$ with $\dim(M/IM)=2$ and ${\operatorname{cd}(I,M)}=3$.
\end{itemize}
\end{corollary}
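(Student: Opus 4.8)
The forward implication (i) $\Rightarrow$ (ii) is immediate, since any ideal $I$ as in (ii) satisfies $\operatorname{cd}(I,M)=3\neq 4=\dim M$ (note that $M$ being faithful forces $\dim M=\dim R=4$). The plan for the converse is a short case analysis: assuming (ii), I would take an arbitrary ideal $I$ of $R$ with $c:=\operatorname{cd}(I,M)<4$ and show that $\operatorname{H}^c_I(M)$ is non-Artinian, organising the argument by $d:=\dim(M/IM)=\dim(R/I)$. By Lemma \ref{l2.3} one always has $c\geq n-d=4-d$; since $c<4$ this gives $d\geq 1$, so the values of $d$ to treat are $1,2,3,4$.

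First I would clear the two extreme values. If $d=4$ then, $R$ being a domain, $I=0$, so $\operatorname{H}^c_I(M)=\operatorname{H}^0_I(M)=M$, which is non-Artinian because $\dim M=4$. If $d=1$ the assertion is precisely Lemma \ref{l2.4}. For the two remaining values $d\in\{2,3\}$ I would invoke the height bound $\operatorname{ht}(I)=\operatorname{ht}_M(I)\leq \dim M-\dim(M/IM)=4-d$ (valid because $M$ is faithful), together with the fact that in a UFD every height-one prime is principal, generated by a prime element.

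Suppose $d=3$. Then $I\neq 0$ (otherwise $d=4$), so $\operatorname{ht}(I)=1$ and hence $I\subseteq Rx$ for some prime element $x$, with $x\in\mathfrak m$ since $Rx$ is a proper prime. If $c=n-d=1$, Lemma \ref{l2.3} gives $\dim\operatorname{Supp}(\operatorname{H}^1_I(M))=3>0$, so $\operatorname{H}^1_I(M)$ is non-Artinian. If instead $c\in\{2,3\}$, then $x\notin I$ (if $x\in I$ then $I=Rx$ and $c\leq\operatorname{ara}(I)=1$, a contradiction) and $\operatorname{cd}(I+Rx,M)=\operatorname{cd}(Rx,M)\leq\operatorname{ara}(Rx)=1<c$, so Lemma \ref{l2.1} applies. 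Suppose now $d=2$. Then $c\geq n-d=2$, so $c\in\{2,3\}$. If $c=2=n-d$, Lemma \ref{l2.3} gives $\dim\operatorname{Supp}(\operatorname{H}^2_I(M))=2>0$ and we are done. If $c=3$, then $\operatorname{ht}(I)\in\{1,2\}$: when $\operatorname{ht}(I)=1$ the principal-prime argument of the previous case (via Lemma \ref{l2.1}) again works, and when $\operatorname{ht}(I)=2$ we are in exactly the situation of hypothesis (ii), which delivers non-Artinianness and completes the proof.

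I do not expect a genuine obstacle here: the real content is the observation that, under the UFD and faithfulness hypotheses in dimension four, Lemmas \ref{l2.1}, \ref{l2.3} and \ref{l2.4} already cover every ideal $I$ except those with $\operatorname{ht}(I)=2$, $\dim(M/IM)=2$ and $\operatorname{cd}(I,M)=3$, and the only point that needs a little care is checking that this case split is exhaustive --- in particular that $d\in\{2,3\}$ forces $\operatorname{ht}(I)\leq 2$ with $\operatorname{ht}(I)=2$ the sole residual possibility. Note that, in contrast to Theorem \ref{t2.5}, this argument uses neither Zariski-closedness of the supports of the local cohomology modules (which is not known in dimension four) nor the full force of Conjecture \ref{conj1}.
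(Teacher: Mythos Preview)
Your argument is correct and complete, but it follows a different route from the paper. The paper first invokes Proposition~3.4 of \cite{HKM} to assert that $\operatorname{Supp}(\operatorname{H}^i_I(R))$ is Zariski-closed for every $i$ when $R$ is a four-dimensional UFD, then applies Theorem~\ref{t2.5} to reduce the whole statement to the verification of Conjecture~\ref{conj1} (that is, to the two cases $d\ge 2$, $c=n-d+1$), and finally disposes of those cases ($d=3,\,c=2$ and $d=2,\,c=3$) with the same UFD/principal-prime trick you use. You bypass Theorem~\ref{t2.5} and the \cite{HKM} input entirely, handling every pair $(d,c)$ directly via Lemmas~\ref{l2.1}, \ref{l2.3} and \ref{l2.4}; this is more elementary and self-contained, at the cost of a slightly longer case list (for instance you must treat $d=3,\,c=3$ and $d=2,\,c=2$ explicitly, whereas the paper absorbs these into Theorem~\ref{t2.5}). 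One small quibble: your parenthetical that Zariski-closedness ``is not known in dimension four'' is at odds with the paper, which cites \cite{HKM} for exactly this fact in the UFD setting---but this is merely a side remark and has no bearing on the validity of your proof.
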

\begin{proof} It sufficies to prove that $(ii)$ implies $(i)$. Since $M$ is faithful, we may replace $M$ by $R$. Now since $R$ is of dimension four, it follows from Proposition 3.4 of \cite{HKM} that $\operatorname{Supp}(\operatorname{H}^{i}_I(R))$ is Zariski-closed for all $i$ and for all ideals $I$ of $R$, and therefore by Theorem \ref{t2.5}, it sufficies to show that $\operatorname{H}^{\operatorname{cd}(I,R)}_I(R)$ is non-Artinian for all ideals $I$ of $R$  with $\dim (R/I)=d\geq 2$ and $\operatorname{cd}(I,R)=5-d$. If $d=3$ and $\operatorname{cd}(I,R)=2$, then it follows that $\operatorname{ht}(I)=\dim(R)-\dim(R/I)=1$ and so $I\subseteq \mathfrak p=(x)$ for some height one prime ideal $\mathfrak p$ of $R$. But then $\operatorname{cd}(I+Rx,M)\leq \operatorname{ara}(I+Rx)=\operatorname{ara}(Rx)=1<2=\operatorname{cd}(I,M)$ and so the result follows from Lemma \ref{l2.1}. So the only case left to be considered is when $d=2$ and $\operatorname{cd}(I,R)=3$. In this case, $\operatorname{ht}(I)\leq \dim(R)-\dim(R/I)=2$ and, from the hypothesis, if $\operatorname{ht}(I)=2$, then $\operatorname{H}^{3}_I(M)$ is non-Artinian. If, on the other hand, $\operatorname{ht}(I)=1$,  then use the same arguments as above and see that $\operatorname{H}^{3}_I(M)$ is non-Artinian. 
\end{proof}
\begin{ack}
The author would like to thank Professor Vahap Erdo\u{g}du for his careful reading of the first draft of this manuscript and many helpful suggestions.
\end{ack}
\footnotesize{\bibliographystyle{plainnat}

\begin{thebibliography}{}
\bibitem{Aghapournahr}
{ M. Aghapournahr and L. Melkersson},  {\em Artinianness of local cohomology modules},
Ark. Mat. \textbf{52} (2014) 1--10.
\bibitem{Atiyah}
{ M. F. Atiyah and I.G. Macdonald},  {\em Introduction to commutative algebra},
Addison-Wesley Publishing Co., Reading, Mass.-London-Don Mills, Ont. 1969.

\bibitem{BS}
{ M.P. Brodmann and R.Y. Sharp}, 
{\em Local Cohomology; An Algebraic Introduction with Geometric Applications, 2nd ed. }(Cambridge Studies in Advanced Mathematics, Vol. 136, Cambridge University Press, Cambridge, 2013).
%
\bibitem{Bahmanpour-Naghipour}
{K. Bahmanpour and R. Naghipour},  {\em Associated primes of local cohomology modules and Matlis duality}, J. Algebra \textbf{320} (2008) 2632€"--2641.
\bibitem{Dibaei}
{ M. T. Dibaei and A. Vahidi}, {\em Artinian and Non-Artinian Local Cohomology Modules},
 Canad. Math. Bull. \textbf{54} (2011) 619--629.
\bibitem{Erdogdu-Yildirim}
{V. Erdo\u{g}du and T. Y\i ld\i r\i m}, {\em On The Cohomological Dimension of Local Cohomology Modules},  arXiv:1504.01148 (2015) (submitted).
\bibitem {Hellus} 
M. Hellus and J. St\"uckrad,  \emph{Artinianness of local cohomology.} J. Commut. Algebra, \textbf{1} (2009), no. 2, 269--274.
\bibitem{Huneke}
{ C. Huneke}, {\em Problems in local cohomology, Free Resolutions in Commutative Algebra and Algebraic Geometry Sundance 90},
 Res. Notes in Math. Jones and Barlett \textbf{2} (1992) 93--108.
\bibitem{HKM}
{C. Huneke, D. Katz and T. Marley}, \emph{On the support of local cohomology}, J. Algebra, \textbf{322} (2009) 3194--3211.
\bibitem{Katzman}
M. Katzman, \emph{An example of an infinite set of associated primes of a local cohomology module}, J. Algebra, \textbf{252} (2002) 161--166.
\bibitem{Khash}
K. Khashyarmanesh, \emph{On the support of local cohomology modules and filter regular sequences}, J. Commut. Algebra \textbf{2} (2010), no. 2, 177--185
\bibitem {Lyu93}
{G. Lyubeznik}, \emph{Finiteness properties of local cohomology modules (an application of D-modules to commutative algebra)}, Invent. Math. \textbf{113} (1993) 41--55.
\bibitem {Lyu00}
{G. Lyubeznik}, \emph{Finiteness properties of local cohomology modules for regular local rings of mixed characteristic: The unramified case}, Comm. Algebra, \textbf{28} (2000) 5867--5882.
\bibitem{Lyu02}
G. Lyubeznik, \emph{A partial survey of local cohomology, local cohomology and its applications}, Lect. Notes Pure Appl. Math.,\textbf{226} (2002), 121--154.
 \bibitem{Marley}
{T. Marley}, {\em The associated primes of local cohomology modules over rings of small dimension}, Manuscripta Math. \textbf{104} (2001) 519--525.
\bibitem{Saremi}
{ Saremi H. and Mafi A.}, {\em On the Finiteness Dimension of Local Cohomology Modules},
Algebra Colloq. \textbf{92} (2003) 517--520.
\bibitem{Singh}
{A.K. Singh}, \emph{P-torsion elements in local cohomology modules}, Math. Res. Lett., \textbf{7} (2000) 165--176.
\bibitem{Singh-Swanson}
 A.K. Singh and  I. Swanson,  \emph{Associated primes of local cohomology modules and of Frobenius powers}, Int. Math. Res. Not.,  (33) (2004) 1703--1733.
\end{thebibliography}
}
\end{document}